\theoremstyle{plain}% Theorem-like structures provided by amsthm.sty
\newtheorem{theorem}{Theorem}[section]
\theoremstyle{definition}
\newtheorem{example}[theorem]{Example}
\theoremstyle{remark}
\newtheorem{remark}{Remark}
\newcommand{\Pb}{\mbox{\rm (P)}\xspace}
\newcommand{\Pbs}{\mbox{\rm (P$_\sigma$)}\xspace}
\newcommand{\U}{\text{\rm U}_{ad}}
\newcommand{\MT}{{\mathcal{M}[0,T]}}
\newcommand{\Y}{{\mathcal{Y}}}
\newcommand{\Ys}{\mathcal{Y}_\sigma}
\newcommand{\delay}{s}
\newcommand{\weight}{\kappa}
\newcommand{\shift}{\varsigma}
\title{Optimal time delays in a class of reaction-diffusion equations
\thanks{The first two authors were partially supported by the Spanish Ministerio de
Econom\'{\i}a y Competitividad under projects MTM2014-57531-P and MTM2017-83185-P.
The third author was supported by the collaborative research center
SFB 910, TU Berlin, project  B6.}}
\author{Eduardo Casas\thanks{Departmento de Matem\'{a}tica Aplicada y Ciencias de la Computaci\'{o}n, E.T.S.I. Industriales y de Telecomunicaci\'on, Universidad de Cantabria, 39005 Santander, Spain, {\tt eduardo.casas@unican.es}.}
\and Mariano Mateos\thanks{Departamento de Matem\'{a}ticas, Campus de Gij\'on, Universidad de Oviedo, 33203, Gij\'on, Spain, {\tt mmateos@uniovi.es}.}
\and Fredi Tr\"oltzsch\thanks{Institut f\"ur Mathematik, Technische Universit\"at Berlin, D-10623 Berlin,
Germany,  {\tt troeltzsch@math.tu-berlin.de.}}}
\begin{document}

\maketitle

\begin{abstract}A class of semilinear parabolic reaction diffusion equations with multiple time delays is considered. These time delays and corresponding weights
are to be optimized such that the associated solution of the delay equation is the best approximation of a  desired state function.  The differentiability of the mapping is proved that associates the solution of the delay equation
to the vector of weights and delays. Based on an adjoint calculus, first-order necessary optimality conditions are derived. Numerical test examples
show the applicability of the concept of optimizing time delays.
\end{abstract}

\begin{quote}
\textbf{Keywords:}
semilinear parabolic equation, multiple time delays,  Pyragas type feedback, optimization,
{learning controller}
\end{quote}

\begin{quote}
\textbf{AMS Subject classification: } 49K20, %Optimal control; Problems involving partial differential equations
49M05, %Numerical Methods based on Necessary Conditions
35K58 %  	Semilinear parabolic equations
\end{quote}

\section{Introduction}
\label{S1}

In this paper, we consider the optimization of Pyragas type feedback controllers in reaction-diffusion equations with
respect to finitely many time delays. The simplest example of an associated optimization problem is the following:
Let the semilinear parabolic equation with time delay $\delay \ge 0$
\begin{equation} \label{E1.1}
\frac{\partial}{\partial t}y(x,t)  - \Delta y(x,t) + R(y(x,t)) = \weight \, (y(x,t-\delay)-y(x,t)), \qquad (x,t) \in Q
\end{equation}
be given in $Q := \Omega \times (0,T),$ where $\Omega \subset \mathbb{R}^d$, $d \le 3$, is a bounded Lipschitz domain.
The equation is complemented by homogeneous Neumann boundary conditions and associated initial conditions.

Find a time delay $\delay$ and a weight $\weight \in \mathbb{R}$ such that the associated state function $y$ minimizes the distance to a desired state function $y_Q$ in the norm of $L^2(Q)$. In particular, we directly  optimize, say ''control'', the time delay $\delay$.

The optimization with respect to finitely many time delays and weights, associated first-order necessary optimality conditions, and numerical tests constitute the main novelty of our paper.

In view of the needed differentiability of
the mapping $\delay \mapsto y$,  the theory of optimality conditions turns out to be quite delicate. This differentiability issue was investigated first by Hale and Ladeira in \cite{Hale_Ladeira_1991} for ordinary  differential equations and
in \cite{Hale_Ladeira_1993} for nonlinear reaction-diffusion equations. They proved a version that is local in time, since under their assumptions the solution $y$ could blow up in finite time.
By a different method including certain monotonicity arguments, we were able to prove a general result on existence and uniqueness  for nonlocal reaction-diffusion equations
including measures in \cite{CMT2018}. This result is valid for arbitrary time horizons $T > 0$ and includes the equations considered here. Having this at our disposal, the proof of differentiability with respect to time delays  {became possible for arbitrary $T > 0$.}

More generally, we will consider multiple time delays $\delay_i$ and associated weights $\weight_i$, $i = 1,\ldots,m$, cf. equation \eqref{E1.2} below. To our best knowledge,  the optimization with respect to time delays  $\delay_i$  and associated weights $\weight_i$ was not yet investigated in literature. Compared with optimal control problems, the time delay $\delay$ and the weight $\weight$ play the role of the control, while $y$ is the state function of the control system. Although $u =(\delay,\weight)$ is not a control in the standard sense, we will occasionally call this vector a control.

This question might be interesting for applications. For instance, in laser technology, feedback controllers of Pyragas type are considered. Here, a laser beam is partially reflected by a semi-permeable mirror and
the reflected part is fed back after some time delay $\delay$. More general, a finite number of mirrors
can be used giving rise to finitely many time delays $\delay_1, \ldots, \delay_m$. Then, instead of \eqref{E1.1},
the more general equation
\begin{equation} \label{E1.2}
\frac{\partial}{\partial t}y(x,t)  - \Delta y(x,t) + R(y(x,t)) = \sum_{i=1}^m \weight_i \, y(x,t-\delay_i)
\end{equation}
is of interest, where the vectors $\delay = (\delay_1,\ldots,\delay_m)$ and $\weight  = (\weight_1,\ldots,\weight_m)$ are at our disposal. For Pyragas type problems with single or multiple time delays, the reader is referred to \cite{pyragas1992,pyragas2006}, the survey volume \cite{schoell_schuster2008}, and exemplarily to the papers \cite{kyrychko_blyuss_schoell11,siebert_alonso_baer_schoell14,siebert_schoell14}.

Our optimization problems with respect to the equation \eqref{E1.2} are somehow intermediate between the ones in our former contributions \cite{nestler_schoell_troeltzsch2016} and \cite{CMT2018} that investigate
the optimization of feedback kernels in nonlocal reaction-diffusion equations. In \cite{nestler_schoell_troeltzsch2016},
a nonlocal Pyragas type control system of the form
\begin{equation} \label{E1.3}
\frac{\partial}{\partial t}y(x,t)  - \Delta y(x,t) + R(y(x,t)) = \weight \, \left[ \int_0^T g(\delay) y(x,t-\delay) \, d\delay - y(x,t)\right]\end{equation}
is considered, where the kernel function $g$ is to be optimized, i.e. it plays the role of a  control. Later, in \cite{CMT2018}, we allowed measures as controls so that, in particular, Dirac measures could appear,
\begin{equation} \label{E1.4}
\frac{\partial}{\partial t}y(x,t)  - \Delta y(x,t) + R(y(x,t)) = \int_0^T y(x,t-\delay) \, d\mu(\delay),
\end{equation}
where the control $\mu$ is a regular Borel measure on $[0,T]$.

Our control system cannot be subsumed as a particular case of \eqref{E1.4}. In \eqref{E1.4}, the measure $\mu$ can be
composed of an absolutely continuous part (that is somehow related to $g$ in \eqref{E1.3}) and a singular part that
can be a combination of Dirac measures.  There is no a priori information on how the structure is, how many Dirac
measures appear, and where they are concentrated. In this sense,  \eqref{E1.4} is much more general than \eqref{E1.2}. On the other hand, the optimization of \eqref{E1.2} is restricted to a subset of the admissible controls for \eqref{E1.4}; in \eqref{E1.2}  the measure $\mu$ is required  to be a linear combination of $m$ Dirac measures $\delta_{\delay_i}$, $i \in \{1,\ldots,m\}$, with $m$ fixed.

This restriction to finitely many Dirac measures might be dictated by the technical background. In the application to Laser technology mentioned above,  the number
of semi-permeable mirrors might be   fixed for a given construction. Another application comes from medical science. For instance,
in Holt and Netoff \cite{Holt_Netoff2014}, linear combinations of {a fixed number of} Dirac measures
 are used in experiments that are related to the treatment of Parkinson's disease.

Our paper is organized as follows: In Section \ref{S2}, {we define and analyze our optimization problem. First,} we prove the differentiability of the mapping $\delay \mapsto y_\delay$. In principle, this differentiability is known from \cite{Hale_Ladeira_1993}. However,
in the setting of \cite{Hale_Ladeira_1993}, the existence of $y$ is only known locally in an interval $[0,\alpha)$. At $\alpha$, the solution  $y$ can blow up. A new version of the Banach fixed-point theorem was applied to prove differentiability. In our case, thanks to certain monotonicity properties, we have global existence on any interval
$[0,T]$ and are able to prove differentiability of the control-to-state mapping. Then, the existence of a solution and the optimality conditions is addressed. Section \ref{S3} is devoted to the numerical discretization of the problem. In Section \ref{S4} we present some numerical examples that show the applicability of our concept of controlling time delays.

%%%%%%%%%%%%%%%%%%%%%%%%%%%%%%%%%%%%%%%%%%%%%%%%%
\section{Analysis of the optimization problem}
\label{S2}
%%%%%%%%%%%%%%%%%%%%%%%%%%%%%%%%%%%%%%%%%%%%%%%%%

In this work, $\Omega$ is a domain of $\mathbb{R}^d$, $d\leq 3$, with Lipschitz boundary $\Gamma$,  while $T>0$ is a fixed final time; we will write $Q=\Omega\times(0,T)$ and  $\Sigma=\Gamma\times(0,T)$.
Moreover, we fix $m\in\mathbb{N}$, real parameters $0\leq a_i \leq b_i$, $i=1,\ldots,m$, and set $b=\max\{b_i:\ i=1,\ldots,m\}$  and $Q^-= \Omega\times(-b,0)$. We assume that $b < T$.

The initial data are defined in $\bar Q^-$ by a continuous function $y_0: \bar Q^- \to \mathbb{R}$. The reaction term is given by a function $R:Q\times\mathbb{R} \to \mathbb{R}$.  The assumptions on $\Omega$, $y_0$, and $R$ will be detailed later.

Finally, we introduce the admissible set
\[
\U=\{u=(\delay,\weight)\in\mathbb{R}^m\times\mathbb{R}^m:\ a_i\leq\delay_i\leq b_i,  \ {\alpha_i \le \kappa_i \le \beta_i,} \ 1 \le i \le m\},
\]
{where $-\infty \le \alpha_i \le \beta_i \le \infty$, $i = 1,\ldots, m$, are given real numbers.}

We consider the optimization problem
\[\Pb\qquad \min_{u\in \U}J(u)=\frac{1}{2}\int_Q(y_u-y_Q)^2\,dxdt+\frac{\nu}{2}|\weight|^2,\]
where {$\nu \ge 0$}, {$|\kappa|$ denotes the Euclidean norm of $\kappa$ in $\mathbb{R}^m$,} and $y_u$ is the unique solution of the state equation \eqref{E2.1} below,
\begin{equation}\label{E2.1}
\left\{\begin{array}{rcll}
\partial_t y -\Delta y +R(x,t,y) &=& \displaystyle\sum_{i=1}^m \weight_i y(x,t-\delay_i)&\mbox{ in }Q\\
\partial_n y & = & 0&\mbox{ on }\Sigma\\
y(x,t) & = & y_0(x,t)&\mbox{ in }Q^-.
\end{array}
\right.
\end{equation}
By $\partial_n$, we denote the outward normal derivative on $\Gamma$.

Notice that the right hand side of \eqref{E2.1} can be written as
 \[\displaystyle\sum_{i=1}^m \weight_i y(x,t-\delay_i) = \displaystyle\int_{[0,T]} y(x,t-s) d\mu(s)\]
with  $\mu = \displaystyle\sum_{i=1}^m\weight_i \delta_{\delay_i}\in \MT.$

{Let us mention that the right-hand side of \eqref{E2.1} is more general than a standard Pyragas feedback
as in equation \eqref{E1.1} that includes the term $-y(x,t)$ in the right-hand side. This term is obtained in \eqref{E2.1} by the particular delay $s_1 = 0$ with a suitable coefficient.}
\vspace{1ex}

We impose the following assumptions on the given data in \Pb.
\begin{itemize}
  \item[(A1)] The domain $\Omega$ is $W^{2,q}$ regular for some  $q>\frac{d}{2}+1$, i.e., if $y\in H^1(\Omega)$, $\Delta y\in L^{q}(\Omega)$ and $\partial_n y\in W^{1-1/q,q}(\Gamma)$, then $y\in W^{2,q}(\Omega)$.
  \item[(A2)] We require $y_0\in C(\bar Q^-)\cap W^{1,q}(-b,0;L^{q}(\Omega))$ and $y_0(\cdot,0)\in W^{2-\frac{2}{q},q}(\Omega)$.
  \item[(A3)] $R$ is a Carath\'eodory function of class $C^1$ with respect to the last variable such that
 \begin{align*}
 &  R(\cdot,\cdot,0) \in L^{q}(Q),\vspace{2mm}\\
 & \exists C_R\in\mathbb{R} : \partial_y R(x,t,y)\geq C_R,\quad \forall y \in \mathbb{R},\vspace{2mm}\\
 & \forall M > 0\ \exists C_M : |\partial_y R(x,t,y)| \leq C_M,\quad \forall |y| \le M,
 \end{align*}
holds  for almost all $(x,t) \in Q$.
 \end{itemize}

Notice that (A1) is satisfied in convex plane polygonal domains or in domains with boundary of class $C^{1,1}$.

We will consider the state space
\[\Y = C(\bar Q)\cap W^{2,1}_{q}(Q),\]
where $W^{2,1}_{q}(Q) = L^{q}(0,T;W^{2,q}(\Omega))\cap W^{1,q}(0,T;L^{q}(\Omega))$;
$\Y$ is a Banach space endowed with the usual intersection norm.

\begin{theorem}\label{T2.1}Under assumptions (A1), (A2), and (A3), for every ${u}\in \U$ there exists a unique solution $y_u\in \Y$ of \eqref{E2.1}. Moreover, for all $r > 0$ there exists a constant $C_r$ such that
\[
\|y_u\|_{\Y}\leq C_r \left(\|y_0\|_{C(\bar Q_-)}\sum_{i = 1}^m|\weight_i| + \|y_0(\cdot,0)\|_{W^{2-\frac{2}{q},q}(\Omega)} + \|R(\cdot,\cdot,0)\|_{L^{q}(Q)}\right)
\]
holds for all $u = (\delay,\weight) \in \mathbb{R}^m \times \mathbb{R}^m$ with $|\weight| \le r$.
\end{theorem}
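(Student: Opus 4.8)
The plan is to prove existence and uniqueness by the \emph{method of steps}, reducing \eqref{E2.1} to a finite chain of delay-free semilinear parabolic problems, and then to obtain the estimate from a global sup-norm bound furnished by the monotonicity in (A3). Fix $u=(\delay,\weight)\in\U$ and set $s_*=\min\{\delay_i:\delay_i>0\}$ when some delay is positive. On the first slab $\Omega\times(0,s_*)$ every argument $t-\delay_i$ with $\delay_i>0$ lies in $(-b,0)$, so that $y(\cdot,t-\delay_i)=y_0(\cdot,t-\delay_i)$ is known data; the zero-delay terms contribute $\weight_i y(x,t)$, which I absorb into the reaction term as $\tilde R(x,t,y)=R(x,t,y)-\big(\sum_{\delay_i=0}\weight_i\big)y$. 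Since $\partial_y\tilde R\ge C_R-\sum_{\delay_i=0}|\weight_i|$ and $\tilde R(\cdot,\cdot,0)=R(\cdot,\cdot,0)$, the term $\tilde R$ still obeys (A3), and the slab problem is a genuine Neumann semilinear equation with $L^q$ source and initial value $y_0(\cdot,0)$. Solving successively on $\Omega\times(ks_*,(k{+}1)s_*)$ — on each slab the delayed terms are determined by the already constructed solution together with $y_0$ — covers $Q$ in finitely many steps; if all delays vanish the problem is delay-free from the start. This is exactly the specialization $\mu=\sum_i\weight_i\delta_{\delay_i}$ of the measure-driven equation of \cite{CMT2018}, whose well-posedness I would invoke on each slab.

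Well-posedness of each slab problem rests on maximal parabolic regularity under (A1): the linear Neumann problem $\partial_t y-\Delta y+c\,y=g$ with $c\in L^\infty$, $g\in L^q$, and initial value in $W^{2-2/q,q}(\Omega)$ has a unique solution in $W^{2,1}_q(Q)$ depending continuously on the data. The hypothesis $q>\tfrac d2+1$ is precisely the threshold giving the embedding $W^{2,1}_q(Q)\hookrightarrow C(\bar Q)$, so the solution lies in $\Y$ and, crucially, its trace furnishes a continuous (hence $L^\infty$) delayed source for the next slab. For the semilinear step one linearizes, $R(x,t,y)=R(x,t,0)+a(x,t)y$ with $a=\int_0^1\partial_y R(x,t,\theta y)\,d\theta$, and closes a fixed-point argument once an $L^\infty$ bound on $y$ is available; uniqueness follows from $|\partial_y R|\le C_M$ on bounded sets together with Gronwall.

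For the estimate I would first bound $\|y_u\|_{C(\bar Q)}$ globally and uniformly in the delays. Writing $R(x,t,y)=R(x,t,0)+g(x,t,y)$ with $g(x,t,0)=0$ and $\partial_y g\ge C_R$, the nonlinear zeroth-order term is monotone; after the shift $y=e^{C_R t}w$ it becomes sign-favourable, and a Gronwall argument for $t\mapsto\sup_{\Omega\times(0,t)}|y|$, in which the delayed right-hand side feeds back with factor at most $\sum_i|\weight_i|$, yields
\[\|y_u\|_{C(\bar Q)}\le M,\]
with $M$ growing like $e^{cT\sum_i|\weight_i|}$ and controlled by $\|y_0\|_{C(\bar Q^-)}\sum_i|\weight_i|$, by $\|y_0(\cdot,0)\|_{W^{2-2/q,q}(\Omega)}$, and by $\|R(\cdot,\cdot,0)\|_{L^q(Q)}$ — the three data entering through the delayed source, the initial value, and the inhomogeneity, respectively. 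The bound is uniform over all admissible delays, since only $\sum_i|\weight_i|\le\sqrt m\,|\weight|$ and $T$ appear. With $\|y_u\|_{C(\bar Q)}\le M$ the coefficient $a$ satisfies $|a|\le C_M$, so the equation is linear with an $L^\infty$ coefficient and maximal regularity promotes the estimate to the $W^{2,1}_q(Q)$-norm, yielding the asserted inequality; the dependence of $C_r$ on $r$ enters precisely here, through $M$ and $C_M$.

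The main obstacle is the global-in-time sup-norm bound that excludes the finite-time blow-up of \cite{Hale_Ladeira_1993}. The monotonicity hypothesis $\partial_y R\ge C_R$ is exactly what supplies it: it renders the zeroth-order term dissipative after the exponential shift and makes the Gronwall feedback from the delayed terms closeable on the whole interval $[0,T]$, uniformly in the delay vector $\delay$. Once this bound is secured, the passage to $W^{2,1}_q$ and the final stability estimate are routine consequences of maximal parabolic regularity.
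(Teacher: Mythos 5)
Your proposal is correct, but it reaches the conclusion by a genuinely different route from the paper. The paper's own proof is a two-line citation argument: existence, uniqueness and the $C(\bar Q)$ bound are taken directly from \cite[Th.~2.2]{CMT2018} applied to the measure $\mu=\sum_{i=1}^m\weight_i\delta_{\delay_i}$ (noting $\|\mu\|_{\MT}=\sum_i|\weight_i|$), after which the right-hand side of \eqref{E2.1} together with $R(\cdot,\cdot,y_u)$ is seen to lie in $L^q(Q)$ and \cite[Th.~IV.9.1]{Lad-Sol-Ura68} supplies the $W^{2,1}_q(Q)$ regularity and the final estimate. You instead reconstruct the $C(\bar Q)$ well-posedness from scratch: the method of steps on slabs of width $s_*=\min\{\delay_i:\delay_i>0\}$ (absorbing zero delays into the reaction term, which preserves (A3) up to a shift of $C_R$), followed by a maximum-principle/Gronwall argument made dissipative by the exponential substitution exploiting $\partial_y R\ge C_R$. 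This is essentially a self-contained re-proof of the special case of \cite{CMT2018} needed here --- legitimate because finitely many Dirac masses admit the step-by-step reduction that a general measure does not --- and it buys transparency about \emph{why} no finite-time blow-up occurs and why the bound is uniform in $\delay$, at the cost of length. Your final bootstrap to $W^{2,1}_q(Q)$ via maximal regularity coincides with the paper's. Two details you should make explicit if you write this out: (i) since $R(\cdot,\cdot,0)$ is only in $L^q(Q)$, the Gronwall iteration for $\sup_{\Omega\times(0,t)}|y|$ must be run through the $L^q$-to-$L^\infty$ parabolic estimate (valid precisely because $q>\tfrac d2+1$), not through an $\int_0^t\|f(\tau)\|_\infty\,d\tau$ bound; (ii) the propagation of the initial trace $y(\cdot,ks_*)\in W^{2-2/q,q}(\Omega)$ from slab to slab, which follows from the trace theorem for $W^{2,1}_q$, is what keeps each slab problem within the scope of the linear theory.
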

\begin{proof}Existence and uniqueness of the solution $y_u\in C(\bar Q)\cap L^2(0,T;H^1(\Omega))$ follow from \cite[Th. 2.2]{CMT2018} with $u = \sum_{i=1}^m \weight_i \delta_{\delay_i}$, where the following estimate is proved
\[
\|y_u\|_{C(\bar Q)} \le c_r \left(\|y_0\|_{C(\bar Q_-)} {\|u\|_{\MT}} + \|y_0(\cdot,0)\|_{C(\bar\Omega)} + \|R(\cdot,\cdot,0)\|_{L^{q}(Q)}\right).
\]
Notice that $\|u\|_{\MT}= \sum_{i = 1}^m|\weight_i|$. Once this is obtained, from \eqref{E2.1} and assumptions (A1)-(A3) we infer $\partial_t y_u-\Delta y_u \in  L^{q}(Q)$. Now, the $W_{q}^{2,1}(Q)$ regularity and the corresponding estimate  follows from \cite[Th. IV.9.1]{Lad-Sol-Ura68} and the inequality established above.
\end{proof}

We mention that the function $\tilde{y}_u$, defined by
\[\tilde y_u(x,t)=\left\{\begin{array}{cc}
                           y_u(x,t), & \ \ 0\leq t \leq T,\\
                           y_0(x,t), & -b\leq t < 0,
                         \end{array}
\right.
\]
belongs to $W^{1,q}(-b,T;L^{q}(\Omega))$. This is a consequence of the regularity established in the theorem and assumption (A2). In what follows, when this does not lead to confusion, we will identify $y_u$ with its extension $\tilde y_u$.
\vspace{1ex}

{By the next result,} we improve the differentiability result of \cite{Hale_Ladeira_1993}.

\begin{theorem}\label{T2.2} The control-to-state mapping $G:\U\to \Y$, $u \mapsto y_u$ {has partial derivatives } $\partial_{\delay_i}{G(u)}$ and
$\partial_{\weight_i}G(u)$
 given as follows: For every ${u}\in \U$ and $1 \le i \le m$, we have $\partial_{\delay_i}{G(u)} = z_i$ where $z_i$ satisfies the equation
\begin{equation}\label{E2.2}
\left\{\begin{array}{l}
\partial_t z -\Delta z +\partial_y R(x,t,{y_u})z =  \displaystyle\sum_{j=1}^m \weight_j z(x,t-\delay_j) - \weight_i\partial_t {y_u}(x,t-\delay_i) \ \mbox{ in }Q\vspace{2mm}\\
\partial_n z = 0\ \mbox{ on }\Sigma,\ z = 0 \ \mbox{ in }Q^-,
\end{array}
\right.
\end{equation}
and $\partial_{\weight_i}G(u) = \eta_i$, where $\eta_i$ satisfies
\begin{equation}\label{E2.3}
\left\{\begin{array}{l}
\partial_t \eta -\Delta \eta +\partial_y R(x,t,y_u)\eta =  \displaystyle\sum_{j=1}^m \weight_j \eta(x,t-\delay_j) + y_u(x,t-\delay_i)\ \mbox{ in }Q\vspace{2mm}\\
\partial_n \eta = 0\ \mbox{ on }\Sigma,\ \eta = 0 \ \mbox{ in }Q^-.
\end{array}
\right.
\end{equation}
\end{theorem}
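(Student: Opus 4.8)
The plan is to prove the two differentiability statements in parallel, both resting on a single linear stability estimate for parabolic delay equations of the form $\partial_t w -\Delta w + c\,w = \sum_{j=1}^m \weight_j w(\cdot,\cdot-\delay_j) + g$ with $w|_{Q^-}=0$, where $c\in L^\infty(Q)$ with $c\ge C_R$ and $g\in L^q(Q)$. Such an estimate, $\|w\|_{\Y}\le C\|g\|_{L^q(Q)}$ with $C$ depending only on the bounds of $c$, the $|\weight_j|$ and the data, follows from the linear counterpart of Theorem~\ref{T2.1} (equivalently, by applying \cite[Th.~2.2]{CMT2018} to the measure $\mu=\sum_j\weight_j\delta_{\delay_j}$ and then repeating the $W^{2,1}_q$-regularity bootstrap used there). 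As a first consequence, taking $c=\partial_yR(x,t,y_u)$ --- which is bounded because $y_u\in C(\bar Q)$ and by (A3) --- and checking that the inhomogeneities $-\weight_i\partial_t y_u(x,t-\delay_i)$ and $y_u(x,t-\delay_i)$ lie in $L^q(Q)$ (the former because $\tilde y_u\in W^{1,q}(-b,T;L^q(\Omega))$, the latter because $y_u\in C(\bar Q)$), this estimate yields existence and uniqueness in $\Y$ of the solutions $z_i$ of \eqref{E2.2} and $\eta_i$ of \eqref{E2.3}, so that it remains to identify them as the partial derivatives.

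The weight derivative is the easier half. For $\theta\ne0$ small I set $u^\theta=u+\theta e_i$, perturbing only $\weight_i$, and $\eta^\theta=(y_{u^\theta}-y_u)/\theta$. Subtracting the two state equations, dividing by $\theta$ and writing $R(x,t,y_{u^\theta})-R(x,t,y_u)=\partial_yR(x,t,\xi^\theta)(y_{u^\theta}-y_u)$ by the mean value theorem (with $\xi^\theta$ between $y_u$ and $y_{u^\theta}$), I find that $\eta^\theta$ solves the linear delay equation with coefficient $\partial_yR(x,t,\xi^\theta)$ and inhomogeneity $y_{u^\theta}(x,t-\delay_i)$. Since $y_{u^\theta}\to y_u$ in $C(\bar Q)$ by the Lipschitz stability of Theorem~\ref{T2.1}, this source converges in $L^q(Q)$ to $y_u(x,t-\delay_i)$ and $\partial_yR(x,t,\xi^\theta)\to\partial_yR(x,t,y_u)$ boundedly a.e.; forming the equation for $\eta^\theta-\eta_i$, whose source then tends to $0$ in $L^q(Q)$, and applying the stability estimate gives $\eta^\theta\to\eta_i$ in $\Y$, i.e. $\partial_{\weight_i}G(u)=\eta_i$.

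For the delay derivative I take $u^\theta=u+\theta e_i$ perturbing only $\delay_i\mapsto\delay_i+\theta$, and $w^\theta=(y_{u^\theta}-y_u)/\theta$. The only new phenomenon is the $i$-th feedback term, which after subtraction I decompose as
\[
\frac{\weight_i}{\theta}\bigl[y_{u^\theta}(x,t-\delay_i-\theta)-y_u(x,t-\delay_i)\bigr]=\weight_i\,w^\theta(x,t-\delay_i-\theta)+\weight_i\,\frac{y_u(x,t-\delay_i-\theta)-y_u(x,t-\delay_i)}{\theta}.
\]
The first summand is the delayed unknown, with the shifted delay $\delay_i+\theta$; the second is a pure time-difference quotient of the fixed function $y_u$ and, because $\tilde y_u\in W^{1,q}(-b,T;L^q(\Omega))$, converges in $L^q(Q)$ to $-\weight_i\,\partial_t y_u(x,t-\delay_i)$, which is exactly the extra source of \eqref{E2.2}. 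Thus $w^\theta$ solves the linear delay equation with coefficient $\partial_yR(x,t,\xi^\theta)$, shifted delay $\delay_i+\theta$ in the $i$-th term, and an $L^q$-bounded source, so the stability estimate provides a bound on $\|w^\theta\|_{\Y}$ that is uniform in $\theta$. Setting $d^\theta=w^\theta-z_i$, I collect all discrepancies into one source $g^\theta$ built from $(\partial_yR(x,t,y_u)-\partial_yR(x,t,\xi^\theta))w^\theta$, the self-shift $\weight_i\bigl(w^\theta(x,t-\delay_i-\theta)-w^\theta(x,t-\delay_i)\bigr)$, and the difference-quotient error against $-\weight_i\partial_t y_u(x,t-\delay_i)$. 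Each tends to $0$ in $L^q(Q)$: the first by dominated convergence using the uniform $\Y$-bound on $w^\theta$; the second by the uniform time-Hölder continuity coming from the parabolic embedding of $W^{2,1}_q(Q)$ applied to the $\Y$-bounded family $w^\theta$; the third by the $W^{1,q}$ difference-quotient convergence. Applying the stability estimate to the equation for $d^\theta$ then yields $\|d^\theta\|_{\Y}\to0$, i.e. $\partial_{\delay_i}G(u)=z_i$.

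The main obstacle is the delay derivative, and within it precisely the decomposition above: the whole point is to separate the variation of the \emph{state} (captured by $w^\theta(x,t-\delay_i-\theta)$, which feeds back into the linearized equation) from the variation of the \emph{delay argument} (captured by the time-difference quotient of the fixed $y_u$). Making the latter rigorous is exactly where the global regularity $\tilde y_u\in W^{1,q}(-b,T;L^q(\Omega))$ --- unavailable in the only-local framework of \cite{Hale_Ladeira_1993} --- is indispensable, since it guarantees both $\partial_t y_u(x,t-\delay_i)\in L^q(Q)$ and the $L^q$-convergence of the difference quotient. A secondary care point, easily handled since $b<T$, is to keep the shifted delay $\delay_i+\theta$ inside $[0,b]$ for small $\theta$, so that the stability constant remains uniform in $\theta$.
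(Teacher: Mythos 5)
Your proposal is correct, and its core coincides with the paper's proof: you use the same splitting of the $i$-th feedback increment into the delayed difference quotient of the state plus a pure time--difference quotient of the fixed $y_u$ (this is exactly \eqref{E2.4}), and the same key lemma that this quotient converges in $L^{q}(Q)$ to $-\partial_t y_u(\cdot,\cdot-\delay_i)$ thanks to $\tilde y_u\in W^{1,q}(-b,T;L^{q}(\Omega))$ (this is \eqref{E2.6}, which the paper proves via \cite[Thm.~1.1, p.~57]{Necas67}). Where you genuinely diverge is the closing step. The paper only derives the uniform bound \eqref{E2.5}, extracts a weakly convergent subsequence in $\Y$, upgrades it to strong convergence in $C(\bar Q)$ via a uniform $C^{0,\mu}(\bar Q)$ bound from \cite[III-10]{Lad-Sol-Ura68}, and passes to the limit in \eqref{E2.4}; existence of $z_i$ is obtained as a by-product of this limit process. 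You instead establish existence and uniqueness for the linearized delay equations up front, form the error $d^\theta=w^\theta-z_i$, collect the three discrepancies (coefficient, self-shift of the delay in the unknown, difference-quotient error) into a single source that vanishes in $L^{q}(Q)$, and apply the linear stability estimate. Both routes rest on the same estimate of Theorem~\ref{T2.1} applied to the linearized equation, and your treatment of the self-shift term via the uniform parabolic H\"older bound is the same ingredient the paper uses for a different purpose. What your version buys is a stronger conclusion: convergence of the difference quotients in the norm of $\Y$ (which is what ``$G$ has partial derivatives as a map into $\Y$'' literally asks for), rather than weak convergence in $\Y$ plus strong convergence in $C(\bar Q)$ and $L^{q}(Q)$; it also avoids the subsequence/uniqueness argument needed to identify the limit. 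One small point to tighten: the restriction on $\theta$ is not really about $b<T$ but about keeping $\delay_i+\theta\ge 0$ (and in $[a_i,b_i]$), which forces one-sided difference quotients when $\delay_i$ sits on the boundary of its interval --- the paper states this explicitly at the start of its proof.
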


\begin{proof}
We fix $u = (\delay,\weight) \in \U$ and write $y=G(u) = G(\delay,\weight)$. First, we calculate the partial derivative with respect to $\delay_i$. For sufficiently small $|\rho|$, we write $y_\rho=G(\delay+\rho e_i,\weight)$, where $e_i$ denotes the $i$-th vector of the canonical base of $\mathbb{R}^m$. We have to compute
\[\partial_{\delay_i} G(\delay,\weight) =\lim_{\rho\to 0}\frac{y_\rho-y}{\rho},\]
where the limit is restricted to $\rho>0$ if $\delay_i=a_i$ and to $\rho <0$ if $\delay_i=b_i$, since we have to determine the right and left derivatives in these points, respectively. Define $z_\rho = \frac{y_\rho-y}{\rho}$; subtracting the partial differential equations and dividing by $\rho$ we get by the mean value theorem for $\hat y_\rho(x,t) = y(x,t)+\theta(x,t)(y_\rho(x,t)-y(x,t))$, $0<\theta(x,t)<1$,
\begin{align}
&\partial_t z_\rho -\Delta z_\rho + \partial_yR(x,t,\hat y_\rho )z_\rho\nonumber\\
&  = \sum_{j \neq i}\weight_j\frac{y_\rho(x,t-\delay_j)-y(x,t-\delay_j)}{\rho}
 + \weight_i\frac{y_\rho(x,t-\delay_i - \rho)-y(x,t-\delay_i)}{\rho}\nonumber\\
 & = \sum_{j\neq i}\weight_j z_\rho(x,t-\delay_j) +{\weight_i z_\rho(x,t-s_i-\rho)}+ \weight_i\frac{y(x,t-\delay_i - \rho)-y(x,t-\delay_i)}{\rho}.\label{E2.4}
\end{align}
Using Theorem \ref{T2.1} and taking into account that $z(0) = 0$ in $\Omega$ and $\partial_n z = 0$ on $\Sigma$, we deduce
\begin{equation}\label{E2.5}
\begin{aligned}
\|z_\rho\|_{\Y} \leq C\,\left(\int_Q\left(\displaystyle\frac{y(x,t-\delay_i-\rho)-y(x,t-\delay_i)}{\rho} \right)^q\,dxdt\right)^{1/q}\\[1ex]
= C\,\left\| \displaystyle\frac{y(\cdot,\cdot-\delay_i-\rho)-y(\cdot,\cdot-\delay_i)}{\rho} \right\|_{L^{q}(Q)}
\end{aligned}
\end{equation}
with some constant $C>0$, which may depend on $\weight$, but is independent of $\rho$ { and $\delay$}.
Since $y\in W^{1,q}(0,T,L^{q}(\Omega))$, we have that
\begin{equation}
\lim_{\rho\to 0} \displaystyle\frac{y(x,t-\delay_i-\rho)-y(x,t-\delay_i)}{\rho} = -\partial_t y(x,t-\delay_i) \ \text{ in } L^{q}(Q).
\label{E2.6}
\end{equation}
Indeed, consider $\varepsilon>0$ arbitrary. Then, for all $|\rho|$ small enough, applying \cite[Thm 1.1 in page 57]{Necas67}, we obtain

%\begin{align*}
%  &\left\|\displaystyle\frac{y(x,t-\delay_i-\rho)-y(x,t-\delay_i)}{\rho} +  \partial_t y(x,t-\delay_i)\right\|_{L^{q}(Q)}  \\
%    &\tg{\left(\int_Q\left(\displaystyle\frac{y(x,t-\delay_i-\rho)-y(x,t-\delay_i)}{\rho} +\partial_t y(x,t-\delay_i)\right)^q\,dxdt\right)^{1/q}}\\
%  = & \tg{\left(\int_Q \left(-\int_0^1\left(\partial_t y(x,t-\delay_i-\lambda \rho)-\partial_t y(x,t-\delay_i)\right)d\lambda\right)^q\,dxdt\right)^{1/q}}\\
%    = & \left\|-\int_0^1\left(\partial_t y(x,t-\delay_i-\lambda \rho)-\partial_t y(x,t-\delay_i)\right)d\lambda\right\|_{L^{q}(Q)} \\
%  \leq& \int_0^1 \left\|\partial_t y(x,t-\delay_i-\lambda \rho)-\partial_t y(x,t-\delay_i)\right\|_{L^{q}(Q)}d\lambda  \\
%  < & \int_0^1\varepsilon d\lambda = \varepsilon.
%  \end{align*}

  {
\begin{align*}
  &\left(\int_Q\left(\displaystyle\frac{y(x,t-\delay_i-\rho)-y(x,t-\delay_i)}{\rho} +\partial_t y(x,t-\delay_i)\right)^q\,dxdt\right)^{1/q}\\
  = & \left(\int_Q \left(-\int_0^1\left(\partial_t y(x,t-\delay_i-\lambda \rho)-\partial_t y(x,t-\delay_i)\right)d\lambda\right)^q\,dxdt\right)^{1/q}\\
    = & \left\|-\int_0^1\left(\partial_t y(\cdot,\cdot-\delay_i-\lambda \rho)-\partial_t y(\cdot,\cdot-\delay_i)\right)d\lambda\right\|_{L^{q}(Q)}\\
  \leq& \int_0^1 \left\|\partial_t y(\cdot,\cdot-\delay_i-\lambda \rho)-\partial_t y(\cdot,\cdot-\delay_i)\right\|_{L^{q}(Q)}d\lambda  \\
  < & \int_0^1\varepsilon d\lambda = \varepsilon.
  \end{align*}
 }

From \eqref{E2.5} and \eqref{E2.6}, we deduce that $\{z_\rho\}_\rho$ is uniformly bounded in $\Y$. Hence we can extract a subsequence that converges weakly in $\Y$ to some $z$. Since $\Y$ is compactly embedded in $L^{q}(Q)$, we also have that $z_\rho \to z$ strongly in $L^{q}(Q)$. {Since the right hand side of \eqref{E2.4} is bounded in $L^q(Q)$ and $y_0(\cdot,0)$ is a H\"older function in $\bar\Omega$, we have that there exists $\mu \in(0,1)$ such that $\{z_\rho\}_\rho$ is bounded in $C^{0,\mu}(\bar Q)$, see \cite[III-10]{Lad-Sol-Ura68}. Using that $C^{0,\mu}(\bar Q)$ is compactly embedded in $C(\bar Q)$, we have that $z_\rho\to z$ strongly in $C(\bar Q)$.} Passing to the limit in \eqref{E2.4}, in view of \eqref{E2.6}, we obtain \eqref{E2.2}.

Now, we calculate the partial derivative with respect to $\weight_i$. For small $|\rho|$, we define $y_\rho=G(\delay,\weight+\rho e_i)$ and $\eta_\rho = (y_\rho-y)/\rho$. As above, there exists $\hat y_\rho(x,t) = y(x,t)+\theta(x,t)(y_\rho(x,t)-y(x,t))$ with some measurable function $0<\theta(x,t)<1$ such that
\[
\partial_t \eta_\rho -\Delta \eta_\rho + \partial_yR(x,t,\hat y_\rho )\eta_\rho  =
\sum_{j = 1}^m\weight_j\eta_\rho(x,t-\delay_j) + y_\rho(x,t-\delay_i).
\]
Again $\{\eta_\rho\}_\rho$ is uniformly bounded in {$\Y\cap C^{0,\mu}(\bar Q)$ for some $\mu>0$}, and we can pass to the limit to obtain \eqref{E2.3}.
\end{proof}

By Theorem \ref{T2.2} and the chain rule, the functional $J$ is differentiable and its derivative has the following form.

\begin{theorem}\label{T3.3}The functional $J$ {has partial derivatives}
\begin{align}
&\frac{\partial J}{\partial \delay_i}(u) = - \weight_i \int_Q\varphi_u(x,t)\partial_t y_u(x,t-\delay_i) \,dxdt,\label{Dwrtdelay}\\
&\frac{\partial J}{\partial \weight_i}(u) = \nu \weight_i +\int_Q\varphi_u(x,t) y_u(x,t-\delay_i) \,dxdt,\label{Dwrtweight}
\end{align}
for $1 \le i \le m$, where the adjoint state $\varphi_{u}\in \Y$  is the unique solution to the {advanced} adjoint equation
\begin{equation}\label{E2.7}
\left\{\begin{array}{l}
-\partial_t \varphi -\Delta \varphi + \partial_yR(x,t,y_u)\varphi = y_u-y_Q + \displaystyle\sum_{i=1}^m \weight_i \varphi (x,t+\delay_i)\ \mbox{ in }Q\vspace{2mm}\\
\partial_n \varphi(x,t) = 0\ \mbox{ on }\Sigma,\ \varphi(x,t) = 0\ \mbox{ if }t\geq T.
\end{array}
\right.
\end{equation}
\end{theorem}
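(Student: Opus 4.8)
The plan is to combine the chain rule (via Theorem~\ref{T2.2}) with an adjoint calculus, the core of which is a change-of-variables identity for the delayed terms. Before differentiating $J$, I would first establish that the advanced adjoint equation \eqref{E2.7} admits a unique solution $\varphi_u\in\Y$. This equation is \emph{linear} in $\varphi$, and its backward-in-time character together with the advanced shifts $\varphi(x,t+\delay_i)$ can be removed by the time reversal $\tau=T-t$, $\psi(x,\tau):=\varphi_u(x,T-\tau)$. Under this substitution each advance $t\mapsto t+\delay_i$ becomes a delay $\tau\mapsto\tau-\delay_i$, the terminal condition $\varphi_u=0$ for $t\ge T$ becomes the homogeneous history condition $\psi=0$ for $\tau\le 0$, and the Neumann condition is preserved. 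The reaction term becomes $\hat R(x,\tau,\psi):=\partial_yR(x,T-\tau,y_u(x,T-\tau))\,\psi-(y_u-y_Q)(x,T-\tau)$, which satisfies (A3): the coefficient $\partial_yR(\cdot,\cdot,y_u)$ is bounded because $y_u\in C(\bar Q)$ is bounded and by (A3), and $\hat R(\cdot,\cdot,0)\in L^{q}(Q)$ since $y_u,y_Q\in L^{q}(Q)$. Hence $\psi$ solves a linear retarded problem of the form \eqref{E2.1}, and Theorem~\ref{T2.1} yields a unique $\psi\in\Y$, so that $\varphi_u\in\Y$.

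Next I would differentiate $J$ directly. By Theorem~\ref{T2.2}, $\partial_{\delay_i}y_u=z_i$ and $\partial_{\weight_i}y_u=\eta_i$ solve \eqref{E2.2} and \eqref{E2.3}, so the chain rule gives
\[
\frac{\partial J}{\partial \delay_i}(u)=\int_Q(y_u-y_Q)\,z_i\,dxdt,\qquad \frac{\partial J}{\partial \weight_i}(u)=\nu\weight_i+\int_Q(y_u-y_Q)\,\eta_i\,dxdt.
\]
The remaining task is to eliminate $z_i$ and $\eta_i$ in favour of $\varphi_u$.

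The heart of the proof is to test equation \eqref{E2.2} with $\varphi_u$, i.e.\ to compute $\int_Q\varphi_u\bigl(\partial_t z_i-\Delta z_i+\partial_yR(x,t,y_u)z_i\bigr)\,dxdt$ in two ways. On one hand this equals $\varphi_u$ against the right-hand side of \eqref{E2.2}, namely $\sum_j\weight_j\int_Q\varphi_u\,z_i(\cdot,\cdot-\delay_j)-\weight_i\int_Q\varphi_u\,\partial_t y_u(\cdot,\cdot-\delay_i)$. On the other hand, integrating by parts in space and time and substituting \eqref{E2.7} transforms it into $\int_Q(y_u-y_Q)z_i+\sum_j\weight_j\int_Q\varphi_u(\cdot,\cdot+\delay_j)\,z_i$; here the homogeneous Neumann conditions on both $z_i$ and $\varphi_u$ annihilate the lateral boundary terms, while the temporal boundary terms vanish because $z_i(\cdot,0)=0$ and $\varphi_u(\cdot,T)=0$. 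Equating the two expressions and invoking the shift identity
\[
\int_Q\varphi_u(x,t+\delay_j)\,z_i(x,t)\,dxdt=\int_Q\varphi_u(x,t)\,z_i(x,t-\delay_j)\,dxdt,
\]
which holds precisely because $z_i$ vanishes for $t<0$ and $\varphi_u$ vanishes for $t\ge T$ (so that the change of variable $t\mapsto t-\delay_j$ leaves the effective domain of integration unchanged), makes the two delayed sums cancel and leaves $\int_Q(y_u-y_Q)z_i=-\weight_i\int_Q\varphi_u\,\partial_t y_u(\cdot,\cdot-\delay_i)$, which is \eqref{Dwrtdelay}. Running the identical argument with $\eta_i$ in place of $z_i$, and the source $+y_u(\cdot,\cdot-\delay_i)$ in place of $-\weight_i\partial_t y_u(\cdot,\cdot-\delay_i)$, yields \eqref{Dwrtweight}.

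The main obstacle I anticipate is the bookkeeping in the shift identity: one must track that the time supports of $z_i$ (respectively $\eta_i$) and of $\varphi_u$ make the boundary contributions of the change of variables disappear, and that the $W^{2,1}_q(Q)$-regularity of $\varphi_u$ and $z_i,\eta_i$ guaranteed by $\Y$ is enough to justify the integrations by parts. Everything else is routine.
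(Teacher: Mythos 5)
Your proof is correct and follows essentially the same route as the paper: chain rule via Theorem~\ref{T2.2}, then testing \eqref{E2.2} and \eqref{E2.3} with $\varphi_u$, integrating by parts with the Neumann and initial/terminal conditions, and cancelling the delayed sums through the shift identity justified by the supports of $z_i$ and $\varphi_u$. Your additional time-reversal argument establishing well-posedness of the advanced adjoint equation \eqref{E2.7} via Theorem~\ref{T2.1} is a sound supplement that the paper leaves implicit.
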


\begin{proof}
Using the chain rule, we obtain
\[
\frac{\partial J}{\partial \delay_i}(u) = \int_Q(y_{u}-y_Q) z_i\,dxdt \quad \text{ and }\quad
\frac{\partial J}{\partial \weight_i}(u) = \int_Q(y_u-y_Q) \eta_i\,dxdt +\nu \weight_i,
\]
where $z_i\in\Y$ is the solution of \eqref{E2.2} and $\eta_i$ is the solution of \eqref{E2.3}.

Let us consider the derivative with respect to $\delay_i$. Using the adjoint state equation \eqref{E2.7}, integration by parts and the equation \eqref{E2.2} satisfied by $z_i$, we obtain
\begin{align*}
\int_Q(y_u&-y_Q) z_i \,dxdt = \\& \displaystyle\int_Q \big[-\partial_t\varphi_u-\Delta \varphi_u+ \partial_yR(x,t,y_u)\varphi_u-\sum_{j=1}^m \weight_j \varphi_u(x,t+\delay_j)\big] z_i \,dxdt\\
=&\displaystyle\int_Q \varphi_u\big[\partial_t z_i-\Delta z_i+\partial_yR(x,t,y_u) z_i- \sum_{j=1}^m\weight_j z_i(x,t-\delay_j)\big]\,dxdt \\
= & -\weight_i \displaystyle\int_Q \varphi_u (x,t)\partial_t y_u(x,t-\delay_i) \,dxdt.
\end{align*}
Here we performed the change of variables $\tilde t=t+\delay_j$ and took into account the final conditions satisfied by $\varphi_u$ along with the initial conditions satisfied by $z_i$ to write
\[\int_Q \varphi_u(x,t+\delay_j) z_i(x,t) dt dx = \int_Q \varphi_u(x,t) z_i(x,t-\delay_j) \, dtdx \]

The derivative with respect to $\weight_i$ is obtained in a similar way.
\end{proof}

Next, we show the well-posedness of $\Pb$.
\begin{theorem}\label{T2.4} {If $\nu > 0$ or $ -\infty < \alpha_i \le \beta_i < \infty$ for all $i \in \{1,\ldots,m\}$, then}
Problem $\Pb$ has a solution $\bar u = (\bar\delay,\bar\weight)$.
\end{theorem}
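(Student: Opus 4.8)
The plan is to use the direct method of the calculus of variations. First I would verify that the admissible set $\U$ is nonempty and that the infimum $\inf_{u \in \U} J(u)$ is finite and nonnegative, so that a minimizing sequence $\{u_k\} = \{(\delay_k,\weight_k)\} \subset \U$ exists with $J(u_k) \to \inf J$. The first task is to extract a convergent subsequence. The delay components $\delay_{k,i}$ lie in the compact intervals $[a_i,b_i]$, so they are automatically bounded. For the weights, I would split into the two hypotheses: if every $\alpha_i,\beta_i$ is finite, then $\weight_k$ lives in the compact box $\prod_i [\alpha_i,\beta_i]$ and is bounded; if instead $\nu > 0$, then the Tikhonov term $\tfrac{\nu}{2}|\weight_k|^2 \le J(u_k)$ is bounded along the minimizing sequence, which forces $\{\weight_k\}$ to be bounded. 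In either case $\{u_k\}$ is a bounded sequence in $\mathbb{R}^m \times \mathbb{R}^m$, so by Bolzano--Weierstrass a subsequence (not relabeled) converges to some $\bar u = (\bar\delay,\bar\weight) \in \U$; here closedness of $\U$ is used to guarantee $\bar u \in \U$.

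Next I would establish that the control-to-state map $G$ is continuous along this sequence in a topology strong enough to pass to the limit in the tracking term. Since $|\weight_k| \le r$ for some fixed $r$, Theorem \ref{T2.1} gives a uniform bound $\|y_{u_k}\|_{\Y} \le C_r$. The compact embedding $\Y \hookrightarrow\hookrightarrow L^2(Q)$ (indeed $\Y \hookrightarrow\hookrightarrow C(\bar Q)$, as already exploited in the proof of Theorem \ref{T2.2}) then yields a further subsequence with $y_{u_k} \to \bar y$ strongly in $L^2(Q)$ for some $\bar y$. The essential point is to identify $\bar y$ as $y_{\bar u}$, i.e. to pass to the limit in the state equation \eqref{E2.1}. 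The delayed terms are the only delicate contribution: I would write $\weight_{k,i}\, y_{u_k}(x,t-\delay_{k,i})$ and pass to the limit using $\weight_{k,i} \to \bar\weight_i$, the strong $L^2(Q)$ (or $C(\bar Q)$) convergence of $y_{u_k}$, and the continuity of the translation operator $\tau \mapsto y(\cdot,\cdot-\tau)$ in $L^2$, which handles $\delay_{k,i} \to \bar\delay_i$; the extension $\tilde y_{u_k} \in W^{1,q}(-b,T;L^q(\Omega))$ from the remark after Theorem \ref{T2.1} makes these shifted functions well defined and their convergence controllable. Together with the Carath\'eodory and boundedness properties of $R$ from (A3), this lets me pass to the limit in the weak (or $W^{2,1}_q$) formulation and conclude $\bar y = y_{\bar u}$ by uniqueness of the solution.

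Finally, I would conclude by lower semicontinuity. The tracking functional $y \mapsto \tfrac12\int_Q (y-y_Q)^2\,dxdt$ is continuous with respect to strong $L^2(Q)$ convergence, so $\tfrac12\int_Q(y_{u_k}-y_Q)^2 \to \tfrac12\int_Q(y_{\bar u}-y_Q)^2$, while the finite-dimensional term $\tfrac{\nu}{2}|\weight_k|^2 \to \tfrac{\nu}{2}|\bar\weight|^2$ by the convergence $\weight_k \to \bar\weight$. Hence $J(\bar u) = \lim_k J(u_k) = \inf_{u\in\U} J(u)$, so $\bar u$ is a minimizer. The main obstacle I anticipate is the limit passage in the delayed reaction term: one must be careful that shifting the time argument by the varying delays $\delay_{k,i}$ is continuous, which is where the $W^{1,q}$ regularity in time of the extended state (continuity of translations in $L^q$) is indispensable; everything else is either compactness in finite dimensions or standard continuity of the cost.
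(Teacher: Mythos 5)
Your plan is correct and shares the paper's overall skeleton: the dichotomy ``either $\nu>0$ gives coercivity of $J$ in $\weight$, or the finite bounds $\alpha_i,\beta_i$ make $\U$ compact'' is exactly the paper's argument, and in both cases the problem reduces to minimizing a continuous function over a situation where a minimizing sequence has a convergent subsequence in the finite-dimensional set $\U$. The one ingredient you treat differently is the continuity of the control-to-state map $u\mapsto y_u$. The paper gets it in one line by observing that $u^k\to u$ in $\mathbb{R}^m\times\mathbb{R}^m$ implies $\sum_i\weight_i^k\delta_{\delay_i^k}\stackrel{*}{\rightharpoonup}\sum_i\weight_i\delta_{\delay_i}$ in $\MT$ and then invoking the stability result \cite[Lemma 3.2]{CMT2018} for the measure-controlled equation, which directly yields $y_{u^k}\to y_u$ in $L^2(0,T;H^1(\Omega))\cap C(\bar Q)$. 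You instead prove this continuity from scratch: uniform bounds from Theorem \ref{T2.1}, compact embedding of $\Y$ into $C(\bar Q)$, passage to the limit in the state equation using the convergence $\weight_{k,i}\to\bar\weight_i$, continuity of time translations applied to the extended states, and uniqueness of the solution to identify the limit. Both routes are sound; the paper's is shorter because the hard work was already done in the measure setting of \cite{CMT2018}, while yours is self-contained and makes explicit where the regularity of the extended state $\tilde y_u$ enters (the only minor remark is that continuity of translations in $L^q$ already holds for $L^q$ functions, so the $W^{1,q}$-in-time regularity you invoke there is convenient but not strictly needed for that step).
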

\begin{proof}If $u^k=(\delay^k,\kappa^k)\to u=(\delay,\weight)$ in $\mathbb{R}^m\times \mathbb{R}^m$, then
$\sum_{i=1}^m \weight_i^k \delta_{\delay_{i}^k}\stackrel{*}{\rightharpoonup}\sum_{i=1}^m \weight_i \delta_{\delay_i}$
 in $\MT$ {as $k \to \infty$}. So following \cite[Lemma 3.2]{CMT2018}, we have that
${y_{u^k}}\to {y_u}$ strongly in $ L^2(0,T;H^1(\Omega))\cap C(\bar Q)$.
Therefore $J$ is continuous in $\U$ and obviously $\U$ is closed in $\mathbb{R}^m\times \mathbb{R}^m$.

{Thanks to our assumptions, either the objective functional is coercive or $\U$ is compact. Since we are dealing with a finite dimensional problem, it is clear that $\Pb$ has a global solution.}
\end{proof}

Now we are able to set up the first order necessary optimality conditions.
\begin{theorem}
Let ${\bar u}\in \U$ be a local solution of $\Pb$ and let $\bar y$ be the associated state defined by
\begin{equation}\label{E3.3}
\left\{\begin{array}{rcll}
\partial_t \bar y -\Delta \bar y +R(\bar y) &=& \displaystyle\sum_{j=1}^m \bar \weight_i \bar y(x,t-\bar \delay_i)&\mbox{ in }Q\\
\partial_n \bar y & = & 0&\mbox{ on }\Sigma\\
\bar y(x,t) & = & y_0(x,t)&\mbox{ in }Q^-.
\end{array}
\right.
\end{equation}
Then there exists a unique adjoint state  $\bar \varphi\in\Y$ such that the adjoint equation
\begin{equation}\label{E3.4}
\left\{\begin{array}{rcll}
-\partial_t \bar \varphi -\Delta \bar \varphi + \partial_yR(x,t,\bar y)\bar \varphi &=& \bar y-y_Q + \displaystyle\sum_{i=1}^m \bar \weight_i \bar\varphi (x,t+\bar \delay_i)&\mbox{ in }Q\\
\partial_n \bar \varphi & = & 0&\mbox{ on }\Sigma\\
\bar \varphi & = & 0&\mbox{ if }t\geq T,
\end{array}
\right.
\end{equation}
the variational inequalities
\begin{equation}
 -{\bar{\weight_i}} \int_Q \partial_t \bar y(x,t-\bar\delay_i)\, \bar \varphi(x,t) \,dxdt (\delay_i-\bar\delay_i)\geq 0\quad \forall \delay_i \in [a_i,b_i],
\end{equation}
{and
\begin{equation}
  {\left(\nu\bar \weight_i + \int_Q \bar y(x,t-\bar\delay_i)\, \bar \varphi(x,t) \,dxdt\right)(\weight_i-\bar\weight_i)\geq 0\quad\forall \weight_i \in [\alpha_i,\beta_i]\cap \mathbb R,}
\end{equation}
}are satisfied for $i=1,\ldots,m$.
\end{theorem}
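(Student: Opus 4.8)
The plan is to obtain both the adjoint state and the variational inequalities directly from the differentiability results already in hand, reducing the proof to the elementary first-order condition for minimizing a differentiable function over a box.

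I would first identify the adjoint state. Equation \eqref{E3.4} is nothing but the advanced adjoint equation \eqref{E2.7} written for $u = \bar u$, with $y_u$ replaced by $\bar y = y_{\bar u}$. By Theorem \ref{T3.3} this equation has a unique solution $\varphi_{\bar u} \in \Y$, so putting $\bar\varphi := \varphi_{\bar u}$ furnishes the required unique adjoint state. If one prefers an existence argument from scratch, the change of time $\tau = T - t$ transforms \eqref{E3.4} into a forward linear parabolic equation with ordinary delays $\bar\delay_i$ and an initial condition at $\tau = 0$, to which Theorem \ref{T2.1} together with \cite{CMT2018} applies; uniqueness is immediate from linearity in $\varphi$.

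Next I would derive the variational inequalities coordinatewise. By Theorem \ref{T3.3}, $J$ possesses the partial derivatives \eqref{Dwrtdelay} and \eqref{Dwrtweight} at every admissible point, and by hypothesis $\bar u$ is a local minimizer over $\U = \prod_{i=1}^m[a_i,b_i] \times \prod_{i=1}^m([\alpha_i,\beta_i]\cap\mathbb{R})$. Since $\U$ is a Cartesian product of intervals, freezing all coordinates at their values in $\bar u$ except the $i$-th delay, the resulting scalar function of $\delay_i$ attains a local minimum at $\bar\delay_i$ on $[a_i,b_i]$, and its derivative there is exactly $\frac{\partial J}{\partial \delay_i}(\bar u)$. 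The one-dimensional first-order condition then gives $\frac{\partial J}{\partial\delay_i}(\bar u)(\delay_i-\bar\delay_i)\ge 0$ for all $\delay_i\in[a_i,b_i]$. Repeating the argument in the $i$-th weight coordinate yields $\frac{\partial J}{\partial\weight_i}(\bar u)(\weight_i-\bar\weight_i)\ge 0$ for all admissible $\weight_i$. Substituting \eqref{Dwrtdelay} and \eqref{Dwrtweight} evaluated at $\bar u$ (so that $\varphi_u = \bar\varphi$ and $y_u = \bar y$) produces precisely the two stated inequalities.

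The delicate points lie only in the bookkeeping at the endpoints, and I expect this rather than any genuine analytic difficulty to be the only thing to check. When $\bar\delay_i\in\{a_i,b_i\}$ the relevant derivative in Theorems \ref{T2.2} and \ref{T3.3} is understood one-sided, but it is exactly this one-sided derivative that enters the scalar first-order condition, and its sign (nonnegative at the left endpoint, nonpositive at the right endpoint) makes the inequality valid for every feasible $\delay_i$. For the weights, the extra care is that $\alpha_i$ or $\beta_i$ may be $\pm\infty$; one therefore tests only with finite $\weight_i\in[\alpha_i,\beta_i]\cap\mathbb{R}$, as in the statement, and since only finite increments $\weight_i-\bar\weight_i$ occur in the feasible-direction argument, the reasoning is unaffected by the unbounded endpoints.
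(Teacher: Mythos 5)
Your proposal is correct and follows exactly the route the paper intends: the adjoint state $\bar\varphi=\varphi_{\bar u}$ is the one already furnished by Theorem \ref{T3.3}, and the variational inequalities are the standard coordinatewise first-order conditions for a box-constrained local minimizer, obtained by substituting \eqref{Dwrtdelay} and \eqref{Dwrtweight} at $\bar u$. The paper leaves this argument implicit (``Now we are able to set up the first order necessary optimality conditions''), and your careful remarks about one-sided derivatives at $\delay_i\in\{a_i,b_i\}$ and about possibly unbounded weight intervals fill in precisely the bookkeeping the authors omit.
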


%%%%%%%%%%%%%%%%%%%%%%
\section{Numerical Discretization}\label{S3}
%%%%%%%%%%%%%%%%%%%%%%
We suppose {that} $\Omega$ {is} polygonal or polyhedral and consider, cf. \cite[definition (4.4.13)]{Brenner-Scott2002}, a quasi-uniform family of triangulations $\{\mathcal{K}_{h}\}_{h>0}$ of $\bar\Omega$ and a {quasi-uniform}
family of  partitions of size $\tau$ of $[0,T]$, $0=t_0<t_1<\dots<t_{N_\tau}=T$. We {define}  $I_k=(t_{k-1},t_k]$, $\tau_k = t_k-t_{k-1}$, $\tau = \max\{\tau_k\}$, and {introduce the space-time mesh size} $\sigma=(h,\tau)$.

Now we consider the finite dimensional spaces
\[
\begin{aligned}
&Y_h = \{z_h\in C(\bar\Omega):\ z_{h|K}\in \mathcal{P}^1(K)\ \forall K\in\mathcal{K}_h\},\\[1ex]
&\Ys^0 =\{\phi_\sigma\in L^2(0,T;Y_h): \phi_{\sigma|I_k}\in \mathcal{P}^0(I_k;Y_h)\ \forall k =1,\dots,N_{\tau}\},\\[1ex]
&\Ys^1 =\{y_\sigma\in C([0,T];Y_h): y_{\sigma|I_k}\in \mathcal{P}^1(I_k;Y_h)\ \forall k =1,\dots,N_{\tau}\}
\end{aligned}
\]
{where $\mathcal{P}^1(K)$ is the set of polynomials of degree 1 in $K$ and, for $i=0,1$,  $\mathcal P^i(I_k;Y_h)$ is the set of polynomials of degree $i$ defined in $I_k$ with values in $Y_h$.}

For $\phi_\sigma\in\Ys^0$, we denote {by} $\phi_\sigma^k\in Y_h$ the value of $\phi_\sigma$ in $I_k$. We also remark that $\Ys^1$ {is contained in} $W^{1,q}(0,T;L^{q}(\Omega))$ and, if $y_\sigma\in\Ys^1$, then $\partial_t y_\sigma$ can be identified with an an element of $\Ys^0$.

{The discrete state equation is defined {in a variational form} as follows: For given control vector $u = (\delay,\weight)$,
the associated discrete state $y_\sigma(u)\in \Ys^1$} is the unique solution of (cf. \cite[Eq. (23)]{Becker-Meidner-Vexler2007})
\begin{align}
& y_\sigma(x,0) = {\Pi}_h y_0(x,0),\quad %\text{\tcb{Define $\Pi_h$}}
\nonumber\\
&\int_Q \frac{\partial y_\sigma}{\partial t}\phi_\sigma \,dxdt+\int_Q \nabla_x y_\sigma\nabla_x\phi_\sigma \,dxdt +\int_Q R(x,t,y_\sigma)\phi_\sigma \,dxdt\label{DSE}
%\\& + \int_\Omega (y_\sigma(x,0)-y_0(x,0))\phi_\sigma(0)dx
\\&\qquad =
\sum_{i=1}^{m}\weight_i\left[\int_0^{\delay_i}y_0(x,t-\delay_i)\phi_\sigma\,dxdt + \int_{\delay_i}^{T}y_\sigma(x,t-\delay_i)\phi_\sigma\,dxdt\right],\quad \forall\phi_\sigma\in\Ys^0,\nonumber
\end{align}
where $\Pi_h:L^2(\Omega)\to Y_h$ is the projection onto $Y_h$ in the $L^2(\Omega)$-sense.

The discretized {optimization} problem is
\[\Pbs\qquad\min_{u\in \U}J_\sigma(u)=\frac{1}{2}\int_Q \left(y_\sigma{(u)}(x,t)-y_Q(x,t)\right)^2 \,dxdt+\frac{\nu}{2} |\weight|^2.
\]

%
%
%
%The discrete problem is
%\[\Pbs\qquad\min_{u\in \U}J_\sigma(u)=\frac{1}{2}\int_Q (y_\sigma\tcb{(u)}(x,t)-y_Q(x,t))^2 \,dxdt+\frac{\nu}{2} |\weight|^2,\]
%where $y_\sigma(u)\in \Ys^1$ is the unique solution of (cf. \cite[Eq. (23)]{Becker-Meidner-Vexler2007})
%\begin{align*}
%& y_\sigma(x,0) = \tcb{\Pi}_h y_0(x,0)\quad \text{\tcb{Define $\Pi_h$}}\\
%&\int_Q \frac{\partial y_\sigma}{\partial t}\phi_\sigma \,dxdt+\int_Q \nabla_x y_\sigma\nabla_x\phi_\sigma \,dxdt +\int_Q R(x,t,y_\sigma)\phi_\sigma \,dxdt
%%\\& + \int_\Omega (y_\sigma(x,0)-y_0(x,0))\phi_\sigma(0)dx
%\\&=
%\sum_{i=1}^{m}\weight_i\left[\int_0^{\delay_i}y_0(x,t-\delay_i)\phi_\sigma\,dxdt + \int_{\delay_i}^{T}y_\sigma(x,t-\delay_i)\phi_\sigma\,dxdt\right],\quad \forall\phi_\sigma\in\Ys^0.
%\end{align*}

{{To compute} the partial derivatives of $J_\sigma$, we invoke an associated discrete adjoint equation.} For every $u\in \U$, we define the {associated} discrete adjoint state $\varphi_\sigma(u)\in \Ys^0$ {as} the unique solution of (cf. \cite[Eq. (25)]{Becker-Meidner-Vexler2007})
\begin{align}
& \varphi_\sigma^{N_\tau+1}=0\nonumber\\
& -\sum_{k=1}^{N_\tau} \int_\Omega z_\sigma(x,t_k) (\varphi_\sigma^{k+1}-\varphi_\sigma^{k})\, dx
   +\int_Q\nabla_x z_\sigma\nabla_x\varphi_\sigma\,dxdt\nonumber\\
& +\int_Q \partial_yR (x,t,y_\sigma(u))z_\sigma\varphi_\sigma\,  \,dxdt
=  \int_Q (y_\sigma(u)-y_Q)z_\sigma\,dxdt \label{DASE}\\
&+ \sum_{i=1}^{m}\weight_i \int_0^{T-\delay_i}\int_\Omega \varphi_\sigma(x,t+\delay_i)z_\sigma\,dxdt ,\quad \forall z_\sigma\in \Ys^1,\nonumber
\end{align}
where we have introduced an artificial $\varphi_\sigma^{N_\tau+1}$ to simplify the notation.

{Both the discrete state equation \eqref{DSE} and the discrete adjoint state equation \eqref{DASE} can be solved using a time-marching scheme. Despite the differences in the variational formulations, in both cases a Crank-Nicholson time-marching scheme  is obtained, cf. \cite[p. 824]{Becker-Meidner-Vexler2007}.}

\begin{remark}{Notice that the time instants $t_k$, $k = 1,\ldots N_\tau$, can be taken completely independent of the location of the time delays.
Moreover, the time delays can admit any value between $0$ and $b$; they also can coincide with some of the the $t_k$'s. Compared with standard Euler time stepping methods, this is an essential advantage of this numerical technique. }
\end{remark}
Now, with exactly the same technique used for problem \Pb, we can prove that $J_\sigma$ has partial derivatives
and that
\begin{align}
\nonumber\frac{\partial J_\sigma}{\partial_{\delay_i}} (u) =& -\weight_i\left[\int_0^{\delay_i}\int_\Omega \partial_t y_0(x,t-s_i) \varphi_\sigma(x,t)\,dxdt\right. \\
&\qquad\qquad+\left.\int_{\delay_i}^T\int_\Omega \partial_t y_\sigma(x,t-s_i) \varphi_\sigma(x,t)\,dxdt\right]\label{E3.1}\\
\nonumber\frac{\partial J_\sigma}{\partial_{\weight_i}} (u) =& \nu\weight_i + \int_0^{\delay_i}\int_\Omega y_0(x,t-s_i)\varphi_\sigma(x,t)\,dxdt \\
&+ \int_{\delay_i}^T\int_\Omega y_\sigma(x,t-s_i)\varphi_\sigma(x,t)\,dxdt.\label{E3.2}
\end{align}
{The proof of {existence of partial derivatives} can be done following the same steps as for the continuous case. In a first step, we {compute the partial derivatives of the discrete state} as in Theorem \ref{T2.2}. The key estimate \eqref{E2.5} is replaced by the stability estimates in \cite[Corollary 4.8]{Meidner-Vexler-2011}; the limit in \eqref{E2.6} is also valid, since $\Ys^1\hookrightarrow W^{1,q}(0,T;Y_h)$. Finally, we can pass to the limit in the linearized discrete equation taking into account that the discretization parameters $(h,\tau)$ are fixed, so we are working in a finite dimensional space. The expressions for the derivatives of the discrete functional follow from the chain rule {as in the proof of Theorem \ref{T3.3}.}}

\begin{remark}{In recent contributions to PDE control, discontinuous Galerkin  (dG) methods became
quite popular, \cite{Becker-Meidner-Vexler2007}.} {We are able} to discretize both the state equation and the adjoint state equation using the same set of discontinuous Galerkin elements dG(0), cf. \cite[Eqs. (18) and (20)]{Becker-Meidner-Vexler2007} and to derive expressions for the partial derivatives of the resulting discrete functional. {However, the partial derivatives of the discrete objective functional are not everywhere continuous. The reason is the following:}

{To simplify the exposition, suppose that $\tau_k=\tau$ for all $k=1,\ldots,N_\tau$. Then, the control-to-discrete-state mapping is not differentiable at the nodes of the time mesh. Notice that the technique used in Theorem \ref{T2.2} cannot be applied because the discrete states are piecewise constant in time and $\Ys^0\not\hookrightarrow W^{1,q}(0,T;Y_h)$, so the derivative with respect to time of the discrete state  can not be {identified with} a function in $L^{q}(Q)$. Taking advantage of the fact that we are dealing with a finite dimensional problem, the partial derivatives of the discrete state with respect to the delays can be computed for any $t\neq t_k$, but  jump discontinuities will appear at the nodes of the time mesh.}

 {These jump discontinuities are inherited by the partial derivatives of the discrete functional. The expressions we obtain for the derivatives of the discrete {functional} are formally the same as \eqref{E3.1} and \eqref{E3.2} if we identify the time derivatives of elements in $\Ys^0$ with combinations of Dirac measures centered at the time nodes. This leads to the discontinuities in the partial derivatives of $J_\sigma$ with respect to the delays.}
\end{remark}

%%%%%%%%%%%%%%%%%%%%%%
\section{Examples}\label{S4}
%%%%%%%%%%%%%%%%%%%%%%

{The aim of this section is to confirm that optimizing time delays in nonlinear parabolic delay equations is a useful concept. In particular, we demonstrate that oscillatory patterns can be achieved by an associated feedback control. In this way, our method is also
some contribution to the topic of {``learning controller''}.

In our test examples,
we do not restrict ourselves to problem \Pb. We will start with {an example}  for {a related} ordinary differential delay equation. They are covered by our parabolic problem as particular case. It might be useful to first solve
an ODE control problem and take the obtained result as initial guess for the solution of the associated PDE control problem. In addition, in the case of ordinary differential equations the graphs of the desired state and the computed optimal state can be graphically better compared.

Moreover, in the context of approximating periodic states of parabolic delay equations,
we also consider a problem with slightly changed ''shifted'' objective functional {as suggested in \cite{nestler_schoell_troeltzsch2016}; see examples \ref{Example3} and \ref{Example4} below}.

To perform the optimization {numerically}, we use the \textsc{Matlab} {code} \texttt{fmincon} with the option \texttt{('SpecifyObjectiveGradient',true)} {that needs the gradient of the function to be minimized.
This code uses subroutines for calculating the functions $u \mapsto J_\sigma(u)$ and
$u \mapsto \nabla J_\sigma(u)$. Both functions are evaluated by solving the discretized state equation and adjoint equation, respectively, according to the methods explained in the last section.}

Since the code \texttt{fmincon} will in general find a local minimum, we performed several solves with different initial points to have a better chance for finding a global minimum.
%{We} have not been able to obtain better results with {the} Matlab derivative-free codes \texttt{patternsearch} and \texttt{ga}.}

In all our examples we focus on the non-monotone non-linearity
\[R(y) = y(y-0.25)(y-1)\]
and fix $T=80$. We take $\nu=0$, and impose the bounds
 $0\leq \delay_i\leq T$,
$|\weight_i|\leq 1000$ for $i=1:m$. Figures \ref{F1} and \ref{F2} show the {states} {up to $t = 2T$} to {confirm} that the obtained solutions {exhibit a} stable {behavior} for $t>T$.

\begin{example} \label{Example1}
{We} start with {one example for an ordinary differential delay equation (ODE)}. {This fits} in our setting as long as {$y_0$ and
$R$ are constant with respect to $x$, because then the equation \eqref{E2.1} reduces to an the ODE.} {We consider the ODE with delay}
\begin{equation} \label{ODE1}
y'(t)+R(y) = \sum_{i=1}^m \weight_i y(t-\delay_i)\mbox{ for }t\in (0,T],\ y(t)=y_0(t),\mbox{ if }t\leq 0
\end{equation}
{for $y: [-b,T] \to \mathbb{R}$, where $y_0: [-b,0] \to \mathbb{R}$ is given and $R:  \mathbb{R} \to  \mathbb{R}$ is the given reaction term.}

{We select}  the target state $y_Q$
{solving} the linear delay equation
\[
y'(t) = -\frac{\pi}{2} y(t-1)\mbox{ in }[0,T],\quad  y(t) = 1 {\mbox{ in } [-1,0).}
\]
This function exhibits a stable oscillatory behavior; {displayed as} green curve in Fig. \ref{F1}. {A nice discussion of this particular equation can be found in Erneux \cite{Erneux2009}.}

For $m=1$ and an appropriate choice of the parameters $u=(\delay,\weight)$, we want to mimic that behavior {by} the solution of the {\em nonlinear} delay equation \eqref{ODE1} with initial data $y_0(t)=1$.

{For} the choice $\delay = 1$ and $\weight=-\pi/2$, {the state} exhibits an oscillatory behavior, but {$|y_u|$} decays in time, see the red dashed curve in Fig. \ref{F1}.
{Our optimization problem is to}  minimize
\begin{equation} \label{functional1}
J(\delay,\weight) = \frac{1}{2}\int_0^T (y_u(t)-y_Q(t))^2 dt
\end{equation}
{subject to}  the state equation \eqref{ODE1} and $0\leq \delay\leq T$  and $|\kappa|\leq 1000$.
{Numerically}, we obtained the solution {$\bar u = (\bar \delay,\bar \weight)$ with}
\[\bar \delay = 1.2409   ,\ \bar \weight= -1.7668\]
{and an associated value} $J(\bar u) = 1.8701$
{of the objective functional}. The gradient {of $J$} at the computed solution has the norm $|\nabla J(\bar u)|=3.8\times 10^{-7}$.
Figure \ref{F1} {displays} {the optimal and the desired state in} blue and green respectively. {For comparison}, $y_u(t)$ for $u=(1,-\pi/2)$ {is plotted} in dashed red.
We had to use {$2^{12}$} time steps in the discretization to capture correctly the behaviour of the linear delay equation that defines the target state.
\end{example}

\begin{figure}
  \centering
  \includegraphics[width=\textwidth]{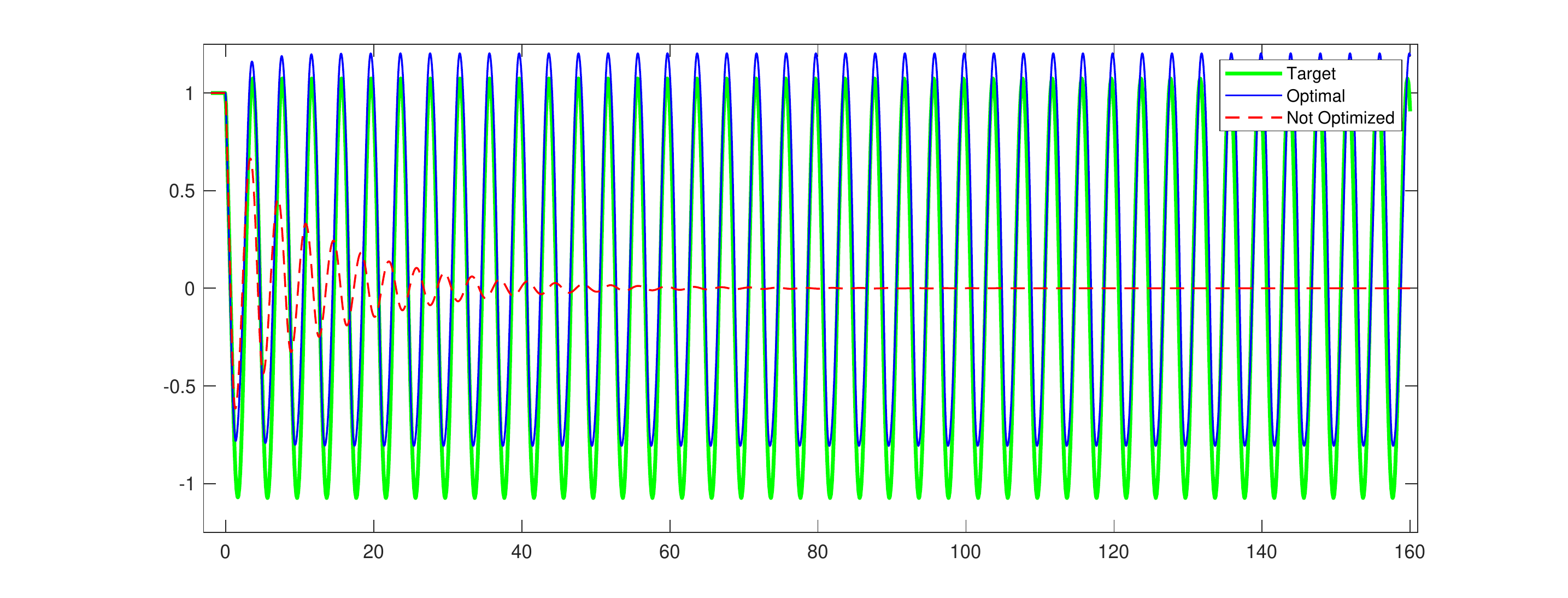}
  \caption{Example \ref{Example1};
  {Target state (green), optimal state (blue), and uncontrolled state (red).}}\label{F1}
\end{figure}

For {all} the next examples, we consider the data of Example 3 in \cite{nestler_schoell_troeltzsch2016}: We fix $\Omega=(-20,20) \subset \mathbb{R}$. The initial {function $y_0$} {models}  an incoming traveling wave, namely
\[
y_0(x,t) =\frac{1}{2}\left[1-\tanh\left(\frac{x-vt}{2}\right)\right],
\]
with $v=0.25\sqrt{2}$. This kind of problems appear in chemical wave propagation; see \cite{Lober2014}. We aim {at steering} the system to the target state shown in Fig. \ref{F2a}
\[
y_Q(x,t) = 3\sin\left(t-\cos\left(\frac{\pi}{20}(x+20)\right)\right).
\]
For the discretization, we take $2^7$ finite elements in space and $2^7$ steps in time.
\begin{example}\label{Example2}
We fix $m=6$ and obtain the optimal parameters shown in Table \ref{Table1}. A graph of the optimal state is shown in Fig. \ref{F2b}.

\begin{table}[h!]
  %\centering
  \[\begin{array}{c|r r}
      i &   \bar\delay_i &\bar\weight_i \\ \hline
      1 &    0.0000 & 0.9846 \\
      2 &    0.9367 & -1.5039 \\
      3 &    6.7481 & 0.4542 \\
      4 &    28.3843 & -2.2799 \\
      5 &    32.2258 & 3.7013 \\
      6 &    39.8133 & -1.3844
    \end{array}
    \]
   \caption{Example \ref{Example2}: Computed optimal result.}\label{Table1}
\end{table}
For these values, we have computed an optimal value  $J(\bar u) = 4209.3$.
{This value is quite large, but note that the measure of $Q = (-20,20)\times (0,80)$ is equal to
$3200$. Therefore, the function $y \equiv 1$ has a norm square of $3200$ in $L^2(Q)$.}

Notice that the lower constraint for the delays
is achieved, since {$\bar s_1=0$}, {and $\sum_{i\neq 1}\kappa_i = -1.0127$, which is quite close to $-\kappa_1$}. {This somehow resembles the original Pyragas feedback form, since the term $y(x,t) = y(x,t-s_1)$ appears in the right-hand side of the partial differential equation, cf. also the subsection on Pyragas type control below}. First order optimality conditions are satisfied: we obtain that $\partial_{s_1}J(\bar u) = 486\geq 0$, remember $\bar s_1$ attains the lower constraint, and the maximum of the absolute value of the rest of the components of the gradient is  $ 2.0\times 10^{-4}$.
\end{example}

%%%%%%%%%%%%%%%%%%%%%%%%%%%%%%
\paragraph*{Objective functional with shift in the target}
%%%%%%%%%%%%%%%%%%%%%%%%%%%%%%

{If a given periodicity of the state is desired}, then two states with the same period should be considered
as equal if they differ only by a time shift. For instance, the functions $t \mapsto \sin(t)$ and
$t \mapsto \sin(t + \pi)$ should be considered as equal. This is natural, since the time until developing an oscillatory behavior may depend on the selected delays. This inherent shift in time is unavoidable and makes the minimization of standard quadratic tracking type functionals difficult.

{Therefore,} in \cite{nestler_schoell_troeltzsch2016} it was suggested to include a shift $\shift$
in the target state $y_Q$. Then the target can be adjusted to the computed states during the numerical
algorithm.
{In view of this, we will minimize now the shifted functional
\begin{equation}\label{function3}
J(u,\shift)=\frac{1}{2}\int_{0}^{T} (y_u(x,t)-y_Q(x,t-\shift))^2 \,dxdt
\end{equation}
simultaneously with respect to $u \in \U$ and $\shift \in \mathbb{R}$.

{We assume that the desired state
$y_Q$ is time-periodic with period $p > 0$. Then we might impose the additional constraint $\shift \in [0,p]$
that shows the existence of an optimal shift by compactness. However, by periodicity, this constraint can be skipped and is  numerically not needed.
}

{The associated optimality conditions are obtained by minor modification.}
{It is easy to see that,} for given $(u,\shift)$, the adjoint state $\varphi$ is the solution of the equation
\[\left\{\begin{array}{rcll}
-\partial_t \varphi -\Delta \varphi +R'(y_u)\varphi &=& y(x,t)-y_Q(x,t-\shift) %\\
%&&
+ \displaystyle\sum_{i=1}^m \weight_i{\varphi (x,t+\delay_i)}&\mbox{ in }Q\\
\partial_n \varphi(x,t) & = & 0&\mbox{ on }\Sigma\\
\varphi(x,t) & = & 0&\mbox{ if }t\geq T.
\end{array}
\right.
\]

The expressions for the derivatives with respect to the delays and the weights are the same as the ones given in Theorem \ref{T3.3}. The partial derivative with respect to the shift $\shift$ is
%\[\tg{\frac{\partial J}{\partial \weight_i}(u,\shift) = \nu \weight_i +\int_Q\varphi_u(x,t) (y_u(x,t-\delay_i)-y_u(x,t)) \,dxdt,}\]
\begin{equation}\frac{\partial J}{\partial\shift}(u,\shift) = \int_{0}^{T}\int_\Omega (y_u(x,t)-y_Q(x,t-\shift))\frac{\partial y_Q}{\partial t}(x,t-\shift)\, dxdt.\label{Dwrtsh}\end{equation}

\begin{example}\label{Example3}
We take the same data as in Example \ref{Example2}, fix $m=2$ delays, {and minimize the shifted objective functional  \eqref{function3}. Note that the desired function $y_Q$ has the time period $2\pi$.
}

{The result is} displayed in Table \ref{Table2}, the {computed optimal} state is shown in Fig. \ref{F2c}. {It is amazing, how good the desired pattern is approximated with only two time delays.}

\begin{table}[h!]
  \centering
  \[\begin{array}{c|r r}
      i &   \bar\delay_i & \bar\weight_i \\ \hline
      1 &  2.2785 & -8.2564 \\
      2 &  4.8126 & -5.2898 \\ \hline
      \multicolumn{3}{l}{\mbox{Target shift }\bar \varsigma= 2.3775}
    \end{array}
    \]
  \caption{Example \ref{Example3} (shifted functional): Optimal result}\label{Table2}
\end{table}

In this case, {\texttt{fmincon} computed as optimal value}  $J(\bar u,\bar\varsigma)= 2114.5$  {with  gradient} $|\nabla J(\bar u,\bar\varsigma)| = 1.1\times 10^{-6}$; {It is remarkable that the shift essentially improved the numerical result of
Example \ref{Example2}. Moreover, the computed periodic pattern remains stable after $t = 80$.}

{In \cite{nestler_schoell_troeltzsch2016} it is also suggested to change the objective functional to
\[{\partial J}(u,\shift) = \int_{T/2}^{T}\int_\Omega (y_u(x,t)-y_Q(x,t-\shift))^2\, dxdt\]
because it is reasonable to assume that it takes some time to transfer the incoming traveling wave $y_0$ into
a periodic solution. Using this new functional and }
{increasing the number of time delays to $m=8$, the objective value
can be reduced {down} to $J(\bar u,\bar\shift) = 218.75$.}

\end{example}

\paragraph*{Pyragas type feedback control}
Finally, we investigate the approximation of oscillatory patterns that are characteristic for Pyragas type feedback control {as in \eqref{E1.1},}
\begin{equation}\label{E4.1}
\left\{\begin{array}{rcll}
\partial_t y -\Delta y +R(x,t,y) &=& \displaystyle\sum_{i=1}^m \weight_i (y(x,t-\delay_i)-y(x,t))&\mbox{ in }Q\\
\partial_n y & = & 0&\mbox{ on }\Sigma\\
y(x,t) & = & y_0(x,t)&\mbox{ in }Q^-.
\end{array}
\right.
\end{equation}
We want to design a feedback controller by adjusting finitely many time delays and associated weights minimizing the shifted functional \eqref{function3}.

The adjoint state equation in this case is
\[\left\{\begin{array}{rcll}
-\partial_t \varphi -\Delta \varphi +R'(y_u)\varphi &=& y(x,t)-y_Q(x,t-\shift) \\
&&
+ \displaystyle\sum_{i=1}^m \weight_i\left(\varphi (x,t+\delay_i)-\varphi (x,t)\right)&\mbox{ in }Q\\
\partial_n \varphi(x,t) & = & 0&\mbox{ on }\Sigma\\
\varphi(x,t) & = & 0&\mbox{ if }t\geq T.
\end{array}
\right.
\]
The expressions for the derivatives with respect to the delays and the shift are {the same as} the ones given in equations \eqref{Dwrtdelay} and \eqref{Dwrtsh}, while the derivative with respect to the weight is given by the expression
\[\frac{\partial J}{\partial \weight_i}(u) = \nu \weight_i +\int_Q\varphi_u(x,t) (y_u(x,t-\delay_i)-y_u(x,t)) \,dxdt.\]
\begin{example}\label{Example4}
With the same data as in examples \eqref{Example2}, we fix $m=4$ and obtain the optimal parameters shown in Table \ref{Table3}. A  plot of the optimal state is displayed in Fig. \ref{F2d}.
For {these} values, we  computed {an} optimal value  $J(\bar u,\bar\shift) = 3763.4$ with $|\nabla J(\bar u,\bar\shift)| = 4.8\times 10^{-4}$.

\begin{table}[h!]
  \centering
  \[\begin{array}{c|r r}
      i &   \bar\delay_i &\bar\weight_i \\ \hline
      1 &   1.8308 & -2.1661 \\
      2 &   7.0918 & 2.2636 \\
      3 &   28.3354 & -1.7753 \\
      4 &   36.1215 & 1.7550 \\ \hline
      \multicolumn{3}{l}{\mbox{Target shift }\bar \varsigma= -2.5013}
    \end{array}
    \]
  \caption{Example \ref{Example4}: Computed optimal result.}\label{Table3}
\end{table}

\end{example}

\begin{figure}
  \centering
  \subfloat[Target]{\label{F2a}\includegraphics[height=.35\textheight]{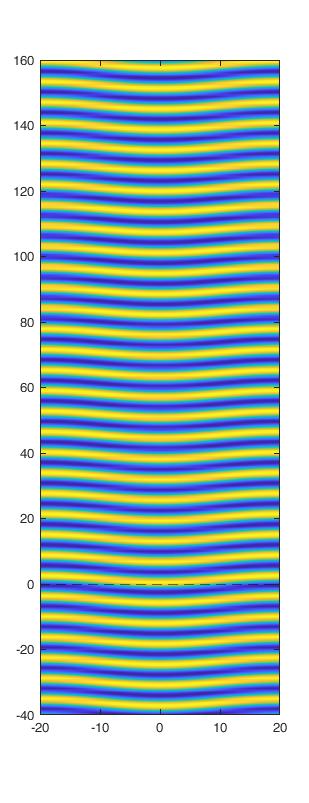}}
  \subfloat[Example \ref{Example2}]{\label{F2b}\includegraphics[height=.35\textheight]{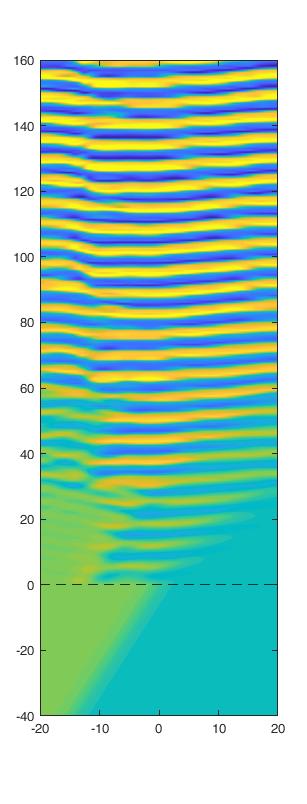}}
  \subfloat[Example \ref{Example3}]{\label{F2c}\includegraphics[height=.35\textheight]{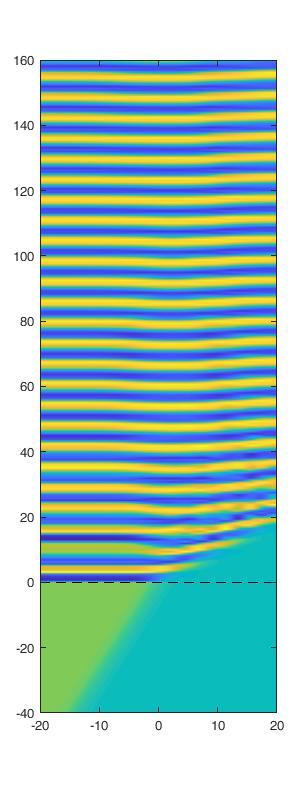}}
  \subfloat[Example \ref{Example4}]{\label{F2d}\includegraphics[height=.35\textheight]{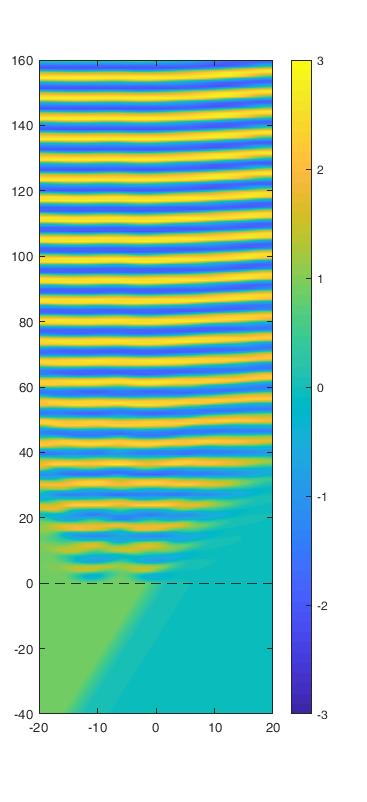}}
  \caption{{Examples \ref{Example2}-\ref{Example4}:} Target and optimal states. All functions are shown in $\Omega\times[-T/2,2T]$.}\label{F2}
\end{figure}

\section*{Acknowledgments}

%\section*{Funding}
The first two authors were partially supported by Spanish Ministerio de Econom\'{\i}a y Competitividad under research projects MTM2014-57531-P and MTM2017-83185-P. The third author was supported by the collaborative
research center SFB 910, TU Berlin, project B6.

%\bibliographystyle{plainurl}
%\bibliography{CMT-May2018}

\end{document}